%
%
%


\documentclass{crm-eca}


\usepackage{graphicx}


\usepackage{Geomvap}  

\newcommand{\bR}{{\mathbb R}}

\newcommand{\cC}{{\mathcal C}}
\newcommand{\cS}{{\mathcal S}}
\newcommand{\cU}{{\mathcal U}}
\newcommand{\ts}{\tilde{s}}

\newtheorem{theorem}{Theorem}
\newtheorem{lemma}[theorem]{Lemma}

\newtheorem{proposition}[theorem]{Proposition}

\theoremstyle{definition}
\newtheorem{definition}[theorem]{Definition}

\theoremstyle{remark}

\begin{document}

\papertitle{Developable surfaces with prescribed boundary}


\paperauthor{Maria Alberich-Carrami\~nana}
\paperaddress{Departament de Matem\`atiques, Universitat Polit\`ecnica de Catalunya} 
\paperemail{maria.alberich{\\@}upc.edu}

\paperauthor{Jaume Amor\'os}
\paperaddress{Departament de Matem\`atiques, Universitat Polit\`ecnica de Catalunya} 
\paperemail{jaume.amoros{\\@}upc.edu}

\paperauthor{Franco Coltraro}
\paperaddress{Institut de Rob\`otica i Inform\`atica Industrial, CSIC-UPC} 
\paperemail{fcoltraro{\\@}iri.upc.edu}


\paperthanks{Research supported by project Clothilde, ERC research grant 741930, and research grants PID2019-103849GB-I00, from the Kingdom of Spain, 2017 SGR 932 from the Catalan Government.}

\makepapertitle

\Summary{It is proved that a generic simple, closed, piecewise regular curve in space can be the boundary of only finitely many developable surfaces with nonvanishing mean curvature. The relevance of this result in the context of the dynamics of developable surfaces is discussed.}


\section{Introduction}
The work presented here originated in the study by the authors of the motion and dynamics of pieces of cloth in the real world, with a view towards its robotic manipulation in a domestic, non-industrial, environment. 
Because in such an environment cloth is subject to low stresses, it makes sense to model garments as inextensible surfaces, and assume that their motion consists of isometries. The authors are currently developing such an isometric strain model, and its application to the control problem of cloth garments (\cite{Col20}).

The original state of cloth in such a model is flat, so the set of possible states of a piece of cloth in our model is the set of developable surfaces isometric to a fixed one, which we may assume to be a domain $R$ in the plane.

To study the dynamics of such a piece of cloth with the Lagrangian formalism, we need coordinates on the set of its states. This is usually done through a discretization scheme, but such schemes often introduce artifacts in the dynamics of cloth. Thus it would be interesting to have intrinsic, analytic coordinates on the space of states that are suitable for the formulation of a Lagrangian with a role analogous to that of the Helfrich Hamiltonian of membrane dynamics (see \cite{Des15}). As explained in the concluding section, such coordinates should allow the formulation of discretization schemes where the resulting mechanics are more independent of how the garment is meshed, more frugal in computation time, and closer to reality. 

Section \ref{s:coordenades} in this note discusses two candidates to the role of generalized coordinates in the space of states of a surface in our isometric strain model, and explains their common limitation from the viewpoint of their application.

Section \ref{s:vora} proposes an alternative approach: to track the motion of the surface by following its boundary. This is not straightforward because the boundary does not determine the position of the surface, but as we explain below our Main Theorem \ref{t:main} is a step in this direction.

\section{The space of developable surfaces} \label{s:coordenades}

{\em Developable surface} is a classical name for a smooth surface with Gaussian curvature 0. These are exactly the surfaces which are locally isometric to a domain in the Euclidean plane $\bR^2$. Developability places a strong constrain on a surface (see \cite{FP01}):
 
\begin{theorem}[structure theorem for developable surfaces, classical]\label{t:clasic}
A $\cC^3$ developable surface $S$ embedded in $\bR^3$ has an open subset which is ruled, with unit normal vector constant along each line of the ruling but varying in a transverse direction. Every connected component of its complement is contained in a plane.
\end{theorem}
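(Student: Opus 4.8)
\medskip\noindent\textbf{Plan of proof.} The plan is to stratify $S$ by the rank of its Gauss map $N\colon S\to S^2$ and treat the two strata separately. Since the Gaussian curvature vanishes identically, the self-adjoint endomorphism $dN$ has rank $\le 1$ at every point, so $S$ is the disjoint union of the closed set $P=\{p\in S: dN_p=0\}$ of \emph{planar points} and the open set $U=\{p\in S:\mathrm{rank}\,dN_p=1\}$ of \emph{parabolic points}. I will first establish the ruling structure on $U$ (adjoining the interior of $P$ to get a maximal open ruled subset), and then deal with the complement.

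On $U$ the line field $p\mapsto\ker dN_p$ is well defined; since $S$ is $\cC^3$ its unit normal is $\cC^2$, hence $dN$ is $\cC^1$, and — its rank being locally constant, equal to $1$ — the kernel line field is $\cC^1$. Being one‑dimensional it integrates (by the elementary theory of ODEs) to a $\cC^1$ foliation of $U$. Fix a plaque and local coordinates $(u,v)$ in which the leaves are the $u$-curves, so that $N_u\equiv0$ and $N_v\neq0$; write $\mathbf{x}(u,v)$ for the immersion. Differentiating $\langle\mathbf{x}_u,N\rangle=0$ in $u$ gives $\langle\mathbf{x}_{uu},N\rangle=0$; differentiating $\langle\mathbf{x}_u,N\rangle=0$ in $v$ and $\langle\mathbf{x}_v,N\rangle=0$ in $u$ and subtracting gives $\langle\mathbf{x}_u,N_v\rangle=0$; differentiating this last identity in $u$ and using $N_{uv}=\partial_v N_u=0$ gives $\langle\mathbf{x}_{uu},N_v\rangle=0$. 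Now $N$ and $N_v$ are linearly independent (nonzero, and orthogonal because $|N|\equiv1$) and both orthogonal to $\mathbf{x}_u$, so they span $\mathbf{x}_u^{\perp}$; since $\mathbf{x}_{uu}$ is orthogonal to both, $\mathbf{x}_{uu}\parallel\mathbf{x}_u$. Reparametrising each leaf by arc length turns this into $\mathbf{x}_{uu}=0$, so the leaf is an open straight segment; by construction $N$ is constant along it, while $N_v\neq0$ records its nontrivial variation in the transverse direction. Thus $U$, and more generally the union of $U$ with $\mathrm{int}\,P$, is ruled as asserted ($\mathrm{int}\,P$ being trivially ruled by pieces of planes).

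It remains to see that every connected component of the complement of this maximal ruled region lies in a plane. On $\mathrm{int}\,P$ one has $dN\equiv0$, so $N$ is locally constant, and each connected component of $\mathrm{int}\,P$ — hence, by continuity of $\mathbf{x}$ and $N$, also its closure — lies in an affine plane orthogonal to the corresponding value of $N$. The step I expect to be the main obstacle is the behaviour of $S$ along the common frontier $\partial U=\partial P$: one must show that the open segment leaves of $U$ extend to maximal straight segments in $S$ that do not cross, that they accumulate onto $\partial U$ in a controlled way (so $\partial U$ is a union of such segments and of planar pieces), and consequently that no component of $S$ minus the maximal ruled region is a ``thin'', non‑planar set. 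This is exactly where the embeddedness of $S$ and the $\cC^3$ regularity are genuinely used — the latter securing uniqueness and $\cC^1$ dependence for the integral curves of $\ker dN$, and thus continuity of the ruling directions up to the frontier — and I would carry it out through the local analysis near frontier points given in \cite{FP01}.
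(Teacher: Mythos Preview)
Your proposal is correct and follows exactly the route the paper indicates: the paper does not actually prove this classical result but only remarks that the structure ``can be deduced from the Gauss map of the surface: Gaussian curvature 0 makes its rank 0 or 1, the latter rank being reached in an open subset of the surface. The normal vector is locally constant in the rank 0 subset,'' citing \cite{FP01} for details. Your stratification $S=P\cup U$ by the rank of $dN$, your integration of $\ker dN$ on $U$ to obtain the straight-line ruling with $N$ constant along leaves and $N_v\neq 0$ transversely, and your honest deferral of the delicate frontier analysis to \cite{FP01} all align precisely with this sketch.
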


This structure can be deduced from the Gauss map of the surface: Gaussian curvature 0 makes its rank 0 or 1, the latter rank being reached in an open subset of the surface. The normal vector is locally constant in the rank 0 subset. 

The dichotomy in the rank of the Gauss map, and varied classical notations, motivate

\begin{definition}
A developable surface is {\em torsal} if the Gauss map has rank 1 in a dense open subset.

{\em Flat patches} are connected subsets of a developable surface with nonempty interior where the Gauss map is constant, i.e. they are contained in a plane. 
\end{definition}

The subdivision of a developable surface into torsal and flat patches is given by the boundary of the vanishing locus of the mean curvature and is not necessarily simple.
 
\medskip

With a view to our intended applications, fix a planar domain $R \subset \bR^2$ which is compact, contractible and has a piecewise $\cC^{\infty}$ boundary. Usually $R$ will be a convex polygon. Define $\cS$ to be the set of all $\cC^3$ surfaces in $\bR^3$ isometric to $R$. These surfaces are all developable, and $\cS$ may be seen as the {\em space of states} of an inextensible (i.e. isometric for the inner distance) deformation of $R$ in Euclidean space.

The space of states $\cS$ can also be defined as the set of $\cC^2$ maps from $R$ to $\bR^3$ which are isometries with the image. As such, it is endowed with the compact-open topology derived from the Euclidean one in $R$ and $\bR^3$. This topology furnishes valuable tips for the study of $\cS$: the set of surfaces containing flat patches has an empty interior because there exist arbitrarily small deformations making the normal vector nonconstant on an open set. Torsal surfaces are {\em stably torsal} if the mean curvature function intersects transversely the zero function. These surfaces form an open subset $\cU \subset \cS$, and suffice for our practical study of $\cS$.

\medskip 

We can try to develop coordinates for the stably torsal state space $\cU$ based on the classical structure theorem. First, let us recall how to identify developable surfaces among the ruled ones

\begin{proposition}[classical, see \cite{Car76}]\label{p:lindep}
A ruled surface parametrized as $\phi(u,v)=\gamma(u)+v \cdot w(u)$, where $\gamma$ is a regular parametrized curve and $w$ a vector field over $\gamma$, is developable if and only if the 3 vectors $\gamma'(u),w(u),w'(u)$ are linearly dependent for all $u$.
\end{proposition}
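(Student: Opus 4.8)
The plan is to reduce developability to the vanishing of a single coefficient of the second fundamental form. Working on the open set where $\phi$ is an immersion, I would first record
\[
\phi_u = \gamma'(u) + v\,w'(u), \qquad \phi_v = w(u), \qquad \phi_{uv} = w'(u), \qquad \phi_{vv} = 0 .
\]
The vanishing of $\phi_{vv}$ is the structural feature of ruled surfaces that drives everything: writing $n$ for a unit normal and $E,F,G,L,M,N$ for the fundamental-form coefficients, one has $N = \phi_{vv}\cdot n = 0$, hence the Gaussian curvature is
\[
K = \frac{LN - M^2}{EG - F^2} = \frac{-\,M^2}{EG - F^2}.
\]
Since $EG - F^2 = \|\phi_u\times\phi_v\|^2 > 0$ on the regular locus, there $K \equiv 0$ if and only if $M \equiv 0$.

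The second step is to compute $M$ explicitly. Since $n$ is a positive multiple of $\phi_u \times \phi_v = \gamma'(u)\times w(u) + v\,\bigl(w'(u)\times w(u)\bigr)$, and using $w'\cdot(w'\times w) = 0$,
\[
M = \phi_{uv}\cdot n = \frac{w'(u)\cdot\bigl(\gamma'(u)\times w(u)\bigr)}{\|\phi_u\times\phi_v\|} = \frac{\det\bigl(w'(u),\gamma'(u),w(u)\bigr)}{\|\phi_u\times\phi_v\|}.
\]
The parameter $v$ has disappeared, which already exhibits the classical fact that developability is a property of entire rulings; and $M \equiv 0$ is now equivalent to $\det(\gamma'(u),w(u),w'(u)) = 0$ for every $u$, i.e. to the asserted linear dependence.

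Putting the two steps together gives the proposition, taking ``developable'' in the sense of the earlier definition, namely $K \equiv 0$. The only point demanding a little care is the set where $\phi$ fails to be an immersion, i.e. where $\phi_u\times\phi_v = 0$: there one restricts to the regular open subset (which is what the classical statement implicitly concerns) and observes that, the linear-dependence condition being independent of $v$, it is a genuine condition on $u$ alone and extends across the degenerate rulings by continuity. I do not anticipate any real obstacle: the argument is a short direct computation, and the subtlety is purely the bookkeeping of where the surface is regular. If one prefers to bypass fundamental forms, the same conclusion follows by noting that the tangent plane is constant along a ruling exactly when the normal direction $\gamma'\times w + v\,(w'\times w)$ is independent of $v$, i.e. when $(\gamma'\times w)\times(w'\times w) = 0$; expanding this triple product as $-\det(\gamma',w,w')\,w$ reproduces the linear-dependence condition.
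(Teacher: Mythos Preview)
Your argument is correct and is essentially the standard textbook computation: use $\phi_{vv}=0$ to get $N=0$, reduce $K=0$ to $M=0$, and identify $M$ with the scalar triple product $\det(\gamma',w,w')$. The alternative at the end, via $(\gamma'\times w)\times(w'\times w)=-\det(\gamma',w,w')\,w$, is also valid.

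There is nothing to compare against in the paper itself: the proposition is stated as classical with a reference to do~Carmo \cite{Car76} and no proof is given. Your write-up is precisely the argument one finds there (Section~3-5), so in that sense you have reproduced the intended proof rather than diverged from it. The only cosmetic point is that your caveat about the non-immersion locus is already implicit in the classical formulation, which concerns the regular part of the ruled surface; you handle it adequately.
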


Given a regular $\cC^2$ curve $\gamma$ there is a way to obtain systematically such rulings over $\gamma$ resulting in regular torsal surfaces:

\begin{proposition} \label{p:coordnormal}
Let $n$ be a unit normal $\cC^1$ vector field over a regular, $\cC^2$ curve with $n' \neq 0$. Then $w = n \times n'$ defines a torsal surface in a neighbourhood of $\gamma$. Moreover, all regular, torsal rulings over $\gamma$ are generated by such $w$, and only $n,-n$ define the same torsal surface.
\end{proposition}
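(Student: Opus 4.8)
\emph{Plan and the direct statement.} I would deduce the forward assertion from Proposition~\ref{p:lindep} plus a short computation of the Gauss map of $\phi(u,v)=\gamma(u)+v\,w(u)$, and the converse from the structure theorem~\ref{t:clasic}, which provides a normal field that I then show must be the $n$ of the statement. For the forward part: since $n$ is a unit field normal to $\gamma$, I would record $n\cdot n'=0$ and $\gamma'\cdot n=0$, and differentiate to get $w'=n\times n''$ (tacitly taking $n$ of class $\cC^2$, so that $\phi$ is a $\cC^1$ surface). Then $\gamma'$, $w$ and $w'$ are all orthogonal to $n(u)$, hence linearly dependent, so $\phi$ is developable by Proposition~\ref{p:lindep}. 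By the vector triple product identity together with the two orthogonality relations,
\[
\phi_u\times\phi_v=(\gamma'+v\,w')\times w=\bigl((\gamma'\cdot n')-v\,\det(n,n',n'')\bigr)\,n,
\]
so the unit normal along each ruling is $\pm n(u)$: constant along the ruling and, since $n'\neq0$, genuinely varying in the transverse direction, i.e.\ $\phi$ is torsal. The scalar factor is nonzero for small $|v|$ wherever $\gamma'\cdot n'\neq0$, and as $\gamma'\cdot n'=-\gamma''\cdot n$ this is exactly where $\phi$ is an immersion near $\gamma$ --- away from the points (if any) where $n$ is the binormal of $\gamma$.

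\emph{The converse.} I would take an arbitrary regular torsal ruling $\phi(u,v)=\gamma(u)+v\,\tilde w(u)$ and rescale so that $|\tilde w|=1$. By Theorem~\ref{t:clasic} its unit normal is constant along each ruling, hence is a $\cC^1$ unit vector field $N(u)$ over $\gamma$; it satisfies $N\cdot\gamma'=0$ because $\gamma$ lies on the surface, and $N'\neq0$ on a dense open set by torsality. Since $\phi_v=\tilde w$ and $\phi_u=\gamma'+v\,\tilde w'$ are tangent to the surface for every $v$, both $\tilde w$ and $\tilde w'$ are orthogonal to $N$. Writing $\tilde w=a\,N'+b\,(N\times N')$ in the plane $N^\perp$ (an orthogonal basis where $N'\neq0$) and differentiating, $\tilde w'\cdot N=a\,(N''\cdot N)=-a\,|N'|^2$, which forces $a=0$. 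Hence $\tilde w=b\,(N\times N')$ with $|b|=1/|N'|$, and reparametrizing the rulings by $v\mapsto v/b$ exhibits $\phi$ as the surface generated by $w=N\times N'$; so $n=N$ does the job.

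\emph{Sign ambiguity, and the hard part.} Replacing $n$ by $-n$ leaves $w=n\times n'$ unchanged, so the surface is the same. Conversely, if $n_1,n_2$ both satisfy the hypotheses and generate the same surface near $\gamma$, then by the computation above its unit normal along $\gamma$ equals both $\pm n_1$ and $\pm n_2$, so $n_1\cdot n_2$ is a continuous function with values in $\{\pm1\}$, hence constant on the connected parameter domain of the closed curve; thus $n_1\equiv n_2$ or $n_1\equiv-n_2$. I expect the converse direction to be the main obstacle: one must pull the normal $N$ out of an a priori arbitrary torsal ruling and show the ruling direction is forced, up to scale, to be $N\times N'$. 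The mechanism is that $\tilde w'\perp N$ together with $N''\cdot N=-|N'|^2\neq0$ annihilates the $N'$-component of $\tilde w$ --- exactly where torsality ($N'\neq0$) is used --- and the same phenomenon explains why the regularity of $\phi$ near $\gamma$ is not automatic.
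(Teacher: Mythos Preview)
Your argument is correct and shares the same core mechanism as the paper: in both directions one shows that $\gamma',w,w'$ are all orthogonal to the unit field $n$, and the converse hinges on the single differentiation step that from $n\cdot w=0$ and $n\cdot w'=0$ one gets $n'\cdot w=0$, forcing $w\parallel n\times n'$. The paper's proof is considerably terser at the converse: instead of invoking the structure theorem to obtain the surface normal $N$ and then expanding $\tilde w$ in the basis $\{N',\,N\times N'\}$, it simply \emph{chooses} $n$ to be a unit vector orthogonal to the (already coplanar, by Proposition~\ref{p:lindep}) triple $\gamma',w,w'$, and then the one-line differentiation of $n\cdot w=0$ finishes. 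Your basis decomposition recovers exactly this identity ($\tilde w'\cdot N=-\tilde w\cdot N'$) but through a longer route; what you gain in exchange is the explicit Gauss-map computation $\phi_u\times\phi_v=\bigl((\gamma'\cdot n')-v\det(n,n',n'')\bigr)n$, which verifies torsality directly and pinpoints the regularity condition $\gamma''\cdot n\neq 0$ that the paper leaves implicit.
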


\begin{proof}
$n$ is normal to $\gamma'$ and $w$ by their definitions, and $w'= n \times n''$ so at every $u$ the vectors $\gamma',w,w'$ are normal to $n$. Also, note that $w \neq 0$ because $n' \neq 0$.

If $\tilde{n}$ is another unit normal vector field such that $\tilde{n} \times \tilde{n}'= \mu w$ for some function $\mu(u)$ then note that $\tilde{n}$ has to be normal to both $w$ and $\gamma'$, hence a multiple of $n$.  

Finally, note that if $w$ is a nonvanishing tangent vector field over $\gamma$ defining a torsal surface around it, then we can select a unit vector field $n$ 
normal to $\gamma',w,w'$. The facts that $n$ is normal to $w$ and $w'$ imply that
$n'$ is also normal to $w$, so $n \times n'$ is a multiple of $w$.
\end{proof}

To define coordinates in the space of stably torsal surfaces $S$ isometric to a fixed bounded domain $R$, the pairs $(\gamma,n)$ of Prop. \ref{p:coordnormal} run into a practical difficulty: the condition that $n' \neq 0$ forces the Gauss map to have rank 1. If $S$ is a stably torsal surface with mean curvature $H$ of varying sign, we must subdivide it by the $H=0$ curves and parametrize separately each component of the complement. To follow a motion of the surface, one has to track the boundary shifts, mergers and splits of these components.

\medskip 

Ushakov proposes in \cite{Ush00} an alternative, PDE based, coordinate scheme viewing developable surfaces as solutions of the trivial Monge-Amp\`ere equation. But this requires parametrization of the surface in the form $z=z(x,y)$. Such parametrizations exist only locally, so their use leads even more intensely to the problem of tracking boundaries, mergers and splits of their subdomains.

\section{The boundary of a developable surface} \label{s:vora}

There is an alternative approach to study the dynamics of developable surfaces isometric to a fixed bounded planar domain $R$: follow the motion of the boundary $\partial R$ in space, and derive from this the developable surface that fills it. This leads to the

\medskip

\noindent {\bf Question.} Given a piecewise smooth simple closed curve $\gamma$ in $\bR^3$, what are the depelopable surfaces with boundary $\gamma$?

\medskip

The degeneracy nature of the trivial Monge-Amp\`ere equation makes it fail to have a unique solution for this kind of boundary problem. Indeed, it is easy to find examples where there is more than one solution, as shown in Fig. \ref{f:2reglatges}.

\begin{figure}
\includegraphics[scale=0.3]{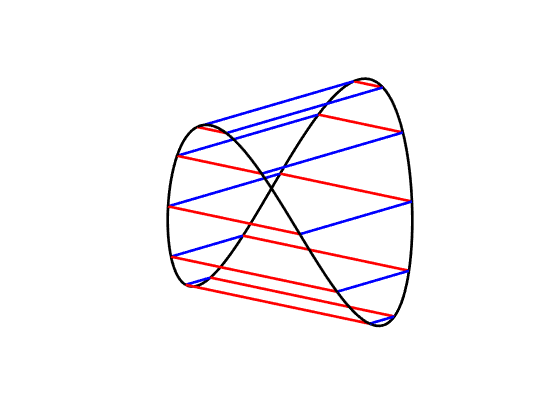}
\caption{A smooth simple closed curve (in black) which is the boundary of two developable 
surfaces (indicated in red and blue respectively)}
\label{f:2reglatges}
\end{figure}

Nevertheless, for problems such as the study of cloth dynamics it is not necessary that the boundary problem have a unique solution. It suffices to know that it will always have a finite set of solutions, because this solution set is then discrete, with different solutions separated by a nontrivial jump in any tagging energy, local coordinate \dots In such case, once one has a developable ruling with a boundary $\gamma_0$ at time $t=0$, the evolution $\gamma_t$ of the boundary will determine the analytic continuation of the $t=0$ developable ruling, and identify a unique ruling for every time $t$. Herein lies the interest of the authors in our

\begin{theorem}[Main Theorem]\label{t:main}
Let $\gamma$ be a simple closed curve in $\bR^3$ which is piecewise $\cC^2$, has nonvanishing curvature, torsion vanishing at finitely many points, and such that only for finitely many pairs $s \neq \ts$ does the tangent to $\gamma$ at $s$ pass through $\gamma(\ts)$. Then, there can be at most finitely many developable surfaces with boundary $\gamma$ and nonzero mean curvature in its interior. 
\end{theorem}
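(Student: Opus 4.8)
The plan is to encode each admissible surface by the pairing of boundary points set up by its rulings, and to bound the number of such pairings; so let $S$ be a developable surface with $\partial S=\gamma$ and mean curvature nowhere zero in its interior. Then the Gauss map has rank exactly $1$ at every interior point, so by Theorem~\ref{t:clasic} the interior of $S$ is entirely ruled and contains no flat patch; moreover its striction curve cannot meet the interior (a point of it would be a singularity of $S$, or would force a flat patch), so the rulings foliate the interior. As that interior is a disk, the set $\Gamma_S$ of all pairs $(s,\ts)$ for which $[\gamma(s),\gamma(\ts)]$ is a ruling of $S$ is an embedded circle in the torus $(\bR/L\mathbb{Z})^2$, where $L$ is the length of $\gamma$: away from the finitely many rulings that are tangent to $\gamma$, issue from a corner, or run along a straight sub-arc of $\gamma$, it is the graph of the continuous orientation-reversing involution matching the two endpoints of each ruling, and it meets the diagonal $\{s=\ts\}$ precisely in the two points where a ruling degenerates to a point. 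Since $S=\bigcup_{(s,\ts)\in\Gamma_S}[\gamma(s),\gamma(\ts)]$, the map $S\mapsto\Gamma_S$ is injective. The corners and straight sub-arcs of $\gamma$ are finite in number and each meets only finitely many rulings — a ruling issuing from a corner $\gamma(c)$ has both $\gamma'(\ts)$ and $\gamma(\ts)-\gamma(c)$ in the two-plane spanned by the one-sided tangents of $\gamma$ at $c$, a condition with finitely many solutions $\ts$ because $\gamma$ has no planar arc — so they contribute only bounded complexity and I suppress them in what follows.

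Away from the finitely many pairs with $\gamma'(s)$ parallel to $\gamma(\ts)-\gamma(s)$ — the tangent-line pairs, finite by hypothesis — the tangent plane of $S$ along the ruling $[\gamma(s),\gamma(\ts)]$ is $\mathrm{span}(\gamma'(s),\,\gamma(\ts)-\gamma(s))$ read at $\gamma(s)$ and $\mathrm{span}(\gamma'(\ts),\,\gamma(\ts)-\gamma(s))$ read at $\gamma(\ts)$, so developability (Proposition~\ref{p:lindep}) forces
\[
F(s,\ts)=0,\qquad F(s,\ts):=\det\bigl(\gamma'(s),\ \gamma'(\ts),\ \gamma(\ts)-\gamma(s)\bigr).
\]
Thus $\Gamma_S$ is an embedded circle inside the compact set $Z:=\{F=0\}$, and it is enough to show that $Z$ carries only finitely many embedded circles; for this it suffices that $Z$ be a \emph{finite $1$-complex}, i.e.\ a compact set that is a $1$-manifold away from some finite subset.

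This last point is the heart of the matter. Since $F$ is $\cC^1$ away from the finitely many ``corner lines'', the implicit function theorem makes $Z$ a $1$-manifold near each point where $\nabla F\neq0$. One computes $\partial_sF=\det(\gamma'',\,\gamma(\ts)-\gamma(s),\,\gamma'(\ts))$ and $\partial_{\ts}F=\det(\gamma'(s),\,\gamma(\ts)-\gamma(s),\,\gamma'')$, whence on $Z$, off the tangent-line locus, $\nabla F$ vanishes precisely when the affine osculating planes of $\gamma$ at $s$ and at $\ts$ coincide; moreover a Taylor expansion along the diagonal gives $F(s,\ts)=c(s)\,(\ts-s)^4+o\bigl((\ts-s)^4\bigr)$ with $c(s)$ a nonzero multiple of $\kappa(s)^2\tau(s)$, so away from the finitely many zeros of the torsion the diagonal is an isolated component of $Z$ — which also explains why the fixed points of the endpoint involution can only lie over zeros of $\tau$. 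The remaining step, which I expect to be the real obstacle, is to show that pairs of points of $\gamma$ sharing an affine osculating plane are finite in number: nonvanishing curvature makes the assignment $s\mapsto$ (affine osculating plane of $\gamma$ at $\gamma(s)$) a $\cC^1$ curve in the $3$-dimensional space of affine planes, and torsion vanishing only finitely often makes this curve an immersion away from finitely many points on each $\cC^2$ arc of $\gamma$, so ``same osculating plane'' ought to be an isolated condition and hence, by compactness of the torus, hold for only finitely many pairs; making this rigorous for a merely $\cC^1$ object, as opposed to a real-analytic one, is delicate, and is where the precise hypotheses of the theorem — and, presumably, the genericity alluded to in the Summary — genuinely enter. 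Granting it, $Z$ is a finite graph, which contains only finitely many embedded circles, so there are finitely many graphs $\Gamma_S$ and hence finitely many surfaces $S$.
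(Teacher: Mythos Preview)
Your reduction is exactly the paper's: the same function $F(s,\ts)=\det\bigl(\gamma'(s),\gamma'(\ts),\gamma(\ts)-\gamma(s)\bigr)$ (called $D$ there, Proposition~\ref{p:morse}), the same encoding of a surface by the curve $\Gamma_S\subset\{F=0\}$, and the same observation that a critical zero of $F$ off the exceptional locus is precisely a pair $(s,\ts)$ at which the affine osculating planes of $\gamma$ coincide. The divergence is only in how you handle that last point, and that is where your genuine gap lies.

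You try to bound the \emph{number} of coinciding-osculating-plane pairs by arguing that $s\mapsto(\text{osculating plane at }s)$ is an immersion into the space of affine planes, and you correctly flag this as delicate for a merely $\cC^1$ map. The paper sidesteps this entirely: it simply computes the Hessian of $F$ at such a critical zero. Differentiating once more and using the Frenet relation $\gamma'''=-\kappa^2\,T+\kappa'\,N+\kappa\tau\,B$, the only component of $\gamma'''$ that escapes the common osculating plane is $\kappa\tau\,B$, and one finds
\[
d^2F=\begin{pmatrix}
\kappa_s\tau_s\det\bigl(\gamma(s)-\gamma(\ts),B_s,\gamma'(\ts)\bigr) & 0\\[2pt]
0 & \kappa_{\ts}\tau_{\ts}\det\bigl(\gamma(s)-\gamma(\ts),\gamma'(s),B_{\ts}\bigr)
\end{pmatrix}.
\]
Each diagonal determinant pairs a binormal with two vectors spanning the osculating plane at the same point, so is nonzero; and $\kappa\neq0$ everywhere while $\tau\neq0$ off a finite set, by hypothesis. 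Hence $F$ is Morse at every critical zero away from the finitely many excluded parameter lines. This single computation gives you, at once, the isolation (hence finiteness) of the critical zeros \emph{and} the local structure of $Z$ at each of them (two transversely crossing smooth arcs), so that $Z$ is genuinely a finite graph. Your weaker conclusion ``$1$-manifold away from a finite set'' does not by itself rule out, say, infinitely many arcs accumulating on a bad point, so the Morse step is doing work on that front as well.
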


Let us point out that the preconditions that we impose on $\gamma$ are generic, i.e. satisfied by a dense open subset of the embeddings of $S^1$ in $\bR^3$.

The starting idea to prove the theorem is another classical result, analogous to Prop. \ref{p:lindep}:

\begin{lemma}\label{l:bitangent}
Let $S$ be a torsal surface with boundary $\gamma$, and $l \subset S$ a segment with endpoints $P,Q$ in $\gamma$. Then the common tangent plane to $S$ along $l$ is tangent to $\gamma$ at both $P,Q$.
\end{lemma}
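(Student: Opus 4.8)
The plan is to identify the segment $l$ as a ruling of $S$ and then combine the constancy of the tangent plane along a ruling with the fact that $\gamma \subset S$. First I would argue that any straight segment contained in a torsal surface is a ruling line: a straight segment has zero curvature, hence zero normal curvature with respect to $S$, so it is an asymptotic curve of $S$; at a point where the Gauss map has rank $1$ (a parabolic point, since $H \neq 0$ forces exactly one principal curvature to vanish) the asymptotic direction is unique and coincides with the null principal direction, which is the ruling direction. Thus $l$ is tangent to the ruling at each of its interior points and so coincides with a ruling. This is what makes the phrase ``common tangent plane along $l$'' meaningful.

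By Theorem \ref{t:clasic} (equivalently, by the description in Prop. \ref{p:coordnormal}, where the ruling through $\gamma(u)$ is $w(u)=n(u)\times n'(u)$ and $n(u)$ is the unit normal to $\gamma',w,w'$), the unit normal $N$ of $S$ is constant along $l$. Hence the tangent plane $T_x S$ is one fixed plane $\Pi$, orthogonal to $N$, for every interior point $x$ of $l$; and since $l\subset\Pi$, this plane passes through both endpoints $P$ and $Q$. The assertion to be proved is exactly that $\Pi$ contains the tangent lines to $\gamma$ at $P$ and at $Q$.

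The core of the argument is then short. Because $\gamma=\partial S\subset S$, the tangent vector to $\gamma$ at $P$ is a tangent vector to $S$ at $P$, i.e.\ it lies in $T_P S$; so it suffices to show $T_P S=\Pi$. This follows from continuity of the Gauss map up to the boundary: as $S$ is $\cC^2$ with boundary, $N$ extends continuously to $\partial S$, and since $P$ is a limit of interior points of $l$ along which $N$ equals the constant value defining $\Pi$, we get $N(P)=N$ and hence $T_P S=\Pi$. Therefore the tangent line to $\gamma$ at $P$ lies in $\Pi$, which is precisely the statement that $\Pi$ is tangent to $\gamma$ at $P$; the identical reasoning at $Q$ completes the proof.

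The main obstacle to watch is this boundary continuity, compounded by the fact that $\gamma$ is only piecewise $\cC^2$, so the endpoint $P$ could be a corner where two smooth arcs of $\gamma$ meet. I would handle this by emphasizing that the conclusion $T_P S=\Pi$ is a statement about $S$ alone and does not depend on the regularity of $\gamma$ at $P$: whatever one-sided tangent directions $\gamma$ possesses at $P$ — one if $P$ is a smooth point, two if $P$ is a corner — each is a tangent direction of $S$ at $P$ and hence lies in $\Pi$. I would also record, for completeness, that $l$ cannot lie in a flat patch of the torsal surface for a generic ruling (flat patches being nowhere dense), and that even there the tangent plane is constant across the whole patch, so the statement holds trivially in that degenerate case as well.
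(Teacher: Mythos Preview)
Your argument is correct and follows the same line as the paper: the normal to $S$ is constant along $l$, and $\gamma$, being contained in $S$, has its tangent vectors in $T_PS$ and $T_QS$. The paper dispatches this in a single sentence, whereas you supply the justifications it leaves implicit---that $l$ is a ruling, and that the tangent plane extends continuously to the boundary---so the approaches coincide with yours being more thorough.
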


The proof consists in pointing out that the normal vector to $S$ stays constant over the segment $l$, and that $\gamma$ is tangent to $S$.

Lemma \ref{l:bitangent} presents developable rulings as arcs of bitangent planes (i.e., tangent to $\gamma$ at 2 points). Such planes are given by pairs $s \neq \ts$ whose tangent lines are coplanar:

\begin{proposition} \label{p:morse}
Let $\gamma : [0,L] \subset \bR^3$ be a simple, closed, arc-parametrized $\cC^3$ curve. The function
\begin{align*}
D : [0,L]^2 &\longrightarrow \bR \\
(s,\ts) &\longmapsto \det \left( \gamma(s)-\gamma(\ts),\gamma'(s),\gamma'(\ts) \right)
\end{align*}
is a Morse function at a neighbourhood of its zeros $(s,\ts)$ such that: $s \neq \ts$, $\gamma$ has nonzero curvature and torsion at both $s,\ts$, and the tangent line to $\gamma$ in each one does not pass through the other point of the curve.
\end{proposition}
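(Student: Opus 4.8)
The plan is to show that at each such zero $(s_0,\ts_0)$ the Hessian of $D$ is nondegenerate. Since $D$ is visibly $\cC^1$ (indeed $\cC^2$, as $\gamma$ is $\cC^3$), it then follows from the inverse function theorem applied to $\nabla D$ that the zero is isolated and that $D$ is Morse in a neighbourhood of it. So the whole problem reduces to a local computation of the first and second partial derivatives of $D$ at $(s_0,\ts_0)$.

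First I would compute the gradient. Writing $\partial_s D = \det(\gamma'(s),\gamma'(s),\gamma'(\ts)) + \det(\gamma(s)-\gamma(\ts),\gamma''(s),\gamma'(\ts))$, the first determinant vanishes (repeated column), so $\partial_s D = \det(\gamma(s)-\gamma(\ts),\gamma''(s),\gamma'(\ts))$, and symmetrically $\partial_{\ts} D = -\det(\gamma(s)-\gamma(\ts),\gamma'(s),\gamma''(\ts))$. At a zero $(s_0,\ts_0)$ of $D$ with the stated genericity, I want to argue that the gradient also vanishes: the hypothesis $D(s_0,\ts_0)=0$ says the three vectors $\gamma(s_0)-\gamma(\ts_0)$, $\gamma'(s_0)$, $\gamma'(\ts_0)$ are coplanar; if moreover $\gamma'(s_0)$ and $\gamma'(\ts_0)$ were independent, then $\gamma(s_0)-\gamma(\ts_0)$ lies in their span, and the bitangent-plane geometry of Lemma~\ref{l:bitangent} forces the tangent line at $s_0$ to contain $\gamma(\ts_0)$ — but that is excluded. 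Hence at the relevant zeros $\gamma'(s_0) \parallel \gamma'(\ts_0)$; using unit speed, $\gamma'(\ts_0) = \pm\gamma'(s_0)$, and then the determinants defining $\partial_s D$ and $\partial_{\ts} D$ both have two parallel columns, so $\nabla D(s_0,\ts_0) = 0$. (One should double-check that the excluded configurations of Proposition~\ref{p:morse} are exactly those where this breaks down; the case $s_0=\ts_0$ is removed by hypothesis, and the torsion and curvature conditions enter in the Hessian step.)

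Next I would differentiate once more. Using $\gamma'(\ts_0) = \varepsilon\,\gamma'(s_0)$ with $\varepsilon = \pm 1$ and continuing to exploit that many determinant terms carry a repeated or parallel column at $(s_0,\ts_0)$, the three second derivatives reduce to expressions in $\gamma', \gamma'', \gamma'''$ and $\gamma(s_0)-\gamma(\ts_0)$. The Frenet frame is the natural bookkeeping device here: write everything in the orthonormal basis $(T,N,B)$ at $s_0$, so that $\gamma' = T$, $\gamma'' = \kappa N$, $\gamma''' = -\kappa^2 T + \kappa' N + \kappa\tau B$, and decompose the chord $\gamma(s_0)-\gamma(\ts_0) = a T + b N + c B$. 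Plugging in, the $2\times 2$ Hessian determinant should come out as (a nonzero numerical constant times) an expression whose vanishing is governed by $\kappa(s_0)\kappa(\ts_0)\tau(s_0)\tau(\ts_0)$ together with the component of the chord transverse to the common tangent direction; the nonvanishing-curvature and nonvanishing-torsion hypotheses, plus the fact that the chord is not along the tangent line (again the excluded bitangent-line case), make it nonzero. That the Hessian determinant factors through precisely these quantities is the reason for the exact list of exceptions in the statement.

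The main obstacle I expect is the Hessian computation: it is the one genuinely unavoidable calculation, and care is needed because several naive terms in $\partial^2_{ss}D$, $\partial^2_{s\ts}D$, $\partial^2_{\ts\ts}D$ only vanish after using $\gamma'(\ts_0)\parallel\gamma'(s_0)$, so the bookkeeping of which terms survive is delicate. A secondary subtlety is handling the sign $\varepsilon$: the cases $\gamma'(\ts_0)=\gamma'(s_0)$ and $\gamma'(\ts_0)=-\gamma'(s_0)$ should be treated in parallel, and one must make sure the curve being \emph{simple} (so $\gamma(s_0)\neq\gamma(\ts_0)$) is used to keep the chord vector nonzero. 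Once the Hessian determinant is shown to be a nonzero multiple of a manifestly nonzero geometric quantity under the hypotheses, Morse-ness at each such zero — and hence on a neighbourhood of the set of such zeros — follows immediately.
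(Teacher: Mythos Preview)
Your proposal has a genuine gap at the very first step: you assume that every zero $(s_0,\ts_0)$ satisfying the hypotheses is a \emph{critical point} of $D$, and set out to prove this. That is not what ``Morse in a neighbourhood of its zeros'' means, and it is false. The zero set $D^{-1}(0)$ is meant to be (generically) a union of \emph{curves} in $[0,L]^2$ --- indeed, this is exactly how the proposition is used in the proof of Theorem~\ref{t:main} --- so at most zeros $\nabla D\neq 0$. What must be shown is only that \emph{if} a zero satisfying the hypotheses happens also to be a critical point, \emph{then} the Hessian there is nondegenerate; at non-critical zeros the Morse condition is vacuous.

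Your argument that $\nabla D(s_0,\ts_0)=0$ is accordingly incorrect. From $D(s_0,\ts_0)=0$ with $\gamma'(s_0),\gamma'(\ts_0)$ independent you conclude only that $\gamma(s_0)-\gamma(\ts_0)$ lies in their span, i.e.\ there is a common (bitangent) \emph{plane}; Lemma~\ref{l:bitangent} says nothing about the tangent \emph{line} at $s_0$ passing through $\gamma(\ts_0)$, and in general it does not. Consequently the conclusion $\gamma'(\ts_0)=\pm\gamma'(s_0)$ is unfounded, and the Hessian computation you sketch is built on a false premise. The paper instead \emph{assumes} $D=0$ and $dD=0$ simultaneously, and from the three vanishing determinants together with the tangent-line hypothesis deduces that the \emph{osculating planes} at $s_0$ and $\ts_0$ coincide (not that the tangents are parallel). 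With that geometric fact in hand, the Hessian is diagonal, with entries $\kappa\tau\cdot\det(\,\text{chord},B,\gamma'\,)$ at each point, and the curvature, torsion and tangent-line hypotheses make both diagonal entries nonzero.
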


\begin{proof}
It is a straightforward computation. With coordinates $(s,\ts)$ we have that
\begin{equation*}
dD= \left( \det \left( \gamma(s)-\gamma(\ts),\gamma''(s),\gamma'(\ts) \right),
\det \left( \gamma(s)-\gamma(\ts),\gamma'(s),\gamma''(\ts) \right) \right)
\end{equation*}

Let $(s,\ts)$ be a zero of $D$ with $s \neq \ts$, which is also a critical point of $D$. If any of the linear subspaces spanned by $\gamma(s)-\gamma(\ts), \gamma'(\ts)$ and by $\gamma(s)-\gamma(\ts), \gamma'(s)$ has dimension less than 2, the tangent line to $\gamma$ at one of the points $\gamma(s),\gamma(\ts)$ contains the other.

When both linear subspaces have dimension 2, the conditions $D(s,\ts)=0, dD(s,\ts)=(0,0)$ show that $\gamma$ has the same osculating plane to $\gamma$ at the points $\gamma(s),\gamma(\ts)$. Because of this, the second differential of $D$ is
\begin{equation*}
d^2 D = \left( \begin{array}{cc}
\kappa_s \tau_s \det \left( \gamma(s)-\gamma(\ts),B_s,\gamma'(\ts) \right) & 0 \\
0 & \kappa_{\ts} \tau_{\ts} \det \left( \gamma(s)-\gamma(\ts),\gamma'(s), B_{\ts} \right)
\end{array} \right)
\end{equation*}
Here $\kappa,\tau,B$ are respectively the curvature, torsion, binormal vector of the Frenet frame, at the point given by their subindex. 
The determinants in the diagonal of $d^2D$ are nonzero because each consists of a binormal vector and a basis for the osculating plane at the same point of the curve.
\end{proof}

Proposition \ref{p:morse} has a version for piecewise $\cC^3$ $\gamma$, saying just that $D$ is Morse under the additional hypothesis that $s,\ts$ do not correspond to corner points, at which $D$ has two different definitions. We are now ready for

\begin{proof}[Proof of Theorem \ref{t:main}]
A torsal developable surface $S$ is foliated by segments which can only end at the boundary or at points of vanishing mean curvature. Having ruled out the latter, $S$ is determined by an arc of bitangent planes $B(t)$, with $t \in [a,b]$, such that the curves $s(B(t)), \ts(B(t))$ formed by the two points of tangency of $B(t)$ cover $\gamma$. 

Away from the finite set of horizontal and vertical lines in $[0,L]^2$ where one of the values $s,\ts$ corresponds to a corner point, point with vanishing torsion, or point whose tangent line intersects $\gamma$ again, the pairs of values $s \neq \ts$ for which there exists at all a bitangent plane to $\gamma$ through $\gamma(s),\gamma(\ts)$ lie by Proposition \ref{p:morse} in the zero set of a Morse function $D$ from an open subset of $[0,L]^2 \subset \bR^2$ to $\bR$. The function $D$ is proper, therefore it is Morse over a suitably small range of values $(-\varepsilon,\varepsilon)$, which implies that $D_0$ is a finite union of smooth curves with transverse intersections in $[0,L]^2$.

The arc $B(t)$ is determined by its tangency points curve $(s(B(t)), \ts(B(t))) \subset [0,L]^2$, which must lie in the union of $D_0$ and finitely many vertical and horizontal lines, and cover $\gamma$, i.e. $\gamma=s(B) \cup \ts(B)$. There are only finitely many possibilities for that, once we specify a beginning point for the curves $s(B),\ts(B)$.
\end{proof}

\section{Future continuation}

The authors hope to carry out the program outlined in this note: to subdivide a developable surface $S$ in patches according to the sign of its mean curvature, and follow its motion in a dynamical system by tracking the boundaries of the patches.

This approach is promising because it works with a 1-dimensional set of space coordinates which satisfy few restrictions, rather than with a 2-dimensional set of space coordinates that are heavily restricted because of the assumption of isometry. 

There is a second interest which is more analytical: can we identify the developable surfaces which minimize a potential such as the gravitatory potential?  Fixing the boundary and the area of the surface leads to the Poisson equation which is not as straightforward to solve as its 1-dimensional analogue (\cite{DH96}). What equation does one get if instead of fixing the area one fixes the Gaussian curvature to be zero? It is likely that global, i.e. parametrized by the fixed domain $R$, solutions will have in general singularities along points or curves.











\begin{thebibliography}{99}



\bibitem{Car76}
M.P. do Carmo, {\sl Differential geometry of curves and surfaces}. Prentice--Hall, 1976.

\bibitem{Col20}
F. Coltraro, {\em Experimental validation of an inextensible cloth model},
IRI Technical Report IRI-TR-20-04, 2020. http://hdl.handle.net/2117/331210

\bibitem{Des15}
M. Deserno, {\em Fluid lipid membranes: From differential geometry to curvature stresses}, Chemistry and Physics of Lipids {\bf 185} (2015) 11--45.

\bibitem{DH96}
Dierkes, Huisken, {\em The n-dimensional analogue of the catenary. Prescribed Area}.
Pp. 1--12 in J. Jost (Ed.), {\sl Geometric analysis and the calculus of variations}, International Press, 1996. 

\bibitem{FP01}
Fischer, Piontkowski,, {\sl Ruled Varieties. An Introduction to Algebraic Differential Geometry}. Vieweg, 2001.

\bibitem{Sed86}
V.D. Sedykh, {\em Structure of the convex hull of a space curve},
Journal of Soviet Mathematics {\bf 33} (1986) 1140--1153.


\bibitem{Ush00}
V. Ushakov, {\em The explicit general solution of trivial Monge-Amp\`ere equation}, 
Comment. Math. Helv. {\bf 75} (2000) 125--133.

\end{thebibliography}
\end{document}